\newcommand{\set}[1]{\left\{#1\right\}}
\newcommand{\Real}{\mathbb{R}}
\newcommand{\To}{\rightarrow}
\newcommand{\cO}{\mathcal{O}}
\newcommand{\pic}{\pi^{\textrm{c}}}
\newcommand{\pir}{\pi^{\textrm{r}}}
\begin{document}

\tdplotsetmaincoords{60}{110}

\title{New symmetries in \\mixed-integer linear optimization}
\subtitle{Symmetry heuristics and complement-based symmetries}
\titlerunning{Novel symmetries in MILP} 
\author{Philipp M. Christophel\thanks{\email{philipp.christophel@sas.com}, corresponding author} \and Menal G\"uzelsoy \and Imre P\'olik}
\institute{SAS Institute, Inc.\\Cary, NC, USA}

\date{\today}%
\maketitle

\begin{abstract}
We present two novel applications of symmetries for mixed-integer linear programming. First we propose two variants of a new heuristic to improve the objective value of a feasible solution using symmetries. These heuristics can use either the actual permutations or the orbits of the variables to find better feasible solutions.

Then we introduce a new class of symmetries for binary MILP problems. Besides the usual permutation of variables, these symmetries can also take the complement of the binary variables. This is useful in situations when two opposite decisions are actually symmetric to each other. We discuss the theory of these symmetries and present a computational method to compute them.

Examples are presented to illustrate the usefulness of these techniques.
\end{abstract}
 
\section{Introduction}

\subsection{Symmetries in MILP}

Let us consider an integer programming problem\footnote{The concepts in this paper naturally extend to the mixed-integer linear case, but for simplicity of notation we will use the pure binary form.} of the form:
\begin{align}
 \notag \min\ &c^Tx\\
 \label{eq:MILPorig}Ax&\leq b\\
 \notag x&\in\set{0,1}^n,
\end{align}
where $x,c\in\Real^n$, $b\in\Real^m$ and $A\in\Real^{m\times n}$.
In general, following the concepts in \cite{ostrowski08,ostrowski11}, a \emph{symmetry} of problem \eqref{eq:MILPorig} is a permutation of its rows and columns such that the permuted problem is identical to the original one. In particular, let $\pir$ and $\pic$ be permutations of the rows and columns, respectively, then they form a symmetry of problem \eqref{eq:MILPorig} if
\begin{subequations}
\begin{align}
\label{eq:symcondmat}A_{i,j} &=A_{\pir(i),\pic(j)}\\
\label{eq:symcondrhs}b_j &= b_{\pir(j)}\\
\label{eq:symcondobj}c_i &= c_{\pic(i)}
\end{align}
\end{subequations}
Now, if $x$ is a feasible solution of \eqref{eq:MILPorig}, then $\pic(x)$, which is  defined naturally as 
\begin{equation}
\pic(x)_i = x_{\pic(i)}
\end{equation}
is also feasible with the same objective value, thus the two solutions are equivalent. It is important to detect the symmetry of the problem so that equivalent solutions are not appearing on the search tree. 

Another important concept related to symmetries is the notion of \emph{orbits}. Two columns (or rows) are said to belong to the same orbit if there is a permutation that maps one to the other. Naturally, each variable belongs to exactly one orbit, thus the orbits form a partition of the columns and rows. 

These techniques are well known (see, e.g., \cite{katebi12} for a recent overview) and have been implemented in all major commercial software packages, including SAS/OR 13.1.

Permutations will be presented with the usual cycle notation. Whenever we talk about symmetries of a problem we will usually drop the row permutations. The all-1 vector will be denoted by $e$.

The paper is structured as follows. In Section \ref{sec:symheur} we present two improvement heuristics that use symmetry information. In Section \ref{sec:compsym} we introduce the notion of complement symmetries and in Section \ref{sec:compsymalg} we present a computational method to obtain these symmetries.

\section{Symmetry heuristics}\label{sec:symheur}

In this section we are interested in symmetries of problem \eqref{eq:MILPorig} that satisfy conditions \eqref{eq:symcondmat} and \eqref{eq:symcondrhs}, but not \eqref{eq:symcondobj}. We will call these \emph{constraint symmetries}, since applying them to a solution maintains feasibility, but not the objective value. It is important that the group of constraint symmetries can be larger than the original symmetry group of the problem, and there are examples for this even in the public benchmark set MIPLIB2010 \cite{koch11} (see instances \texttt{bab5}, \texttt{msc98}, \texttt{neos\_1109824}, \texttt{ns1766074}, \texttt{ran16x16} and \texttt{unitcal\_7}, for example). 

We can use these permutations to improve the objective value of a feasible solution. There are multiple ways to use this information, here we present two different schemes and discuss which one is better in certain situations. 

\subsection{Direct improvement heuristics}

Having a list of generators of the group of constraint symmetries we can just apply them repeatedly to a feasible solution and see if we get a better objective value. An advantage of this heuristic is that using the symmetries guarantees that feasibility is preserved. In general, without the symmetry information this would be complicated to achieve. On the other hand, trying to permute the solution to minimize its objective value is equivalent to optimizing a linear objective function over a group defined by generators, a problem that is known to be NP hard \cite{buchheim05}. 

Also, the set of generators we have is usually very small. On one hand this is an advantage, because the algorithms that find these symmetries are faster, but it can easily happen that none of the symmetries improves the currently best solution. To alleviate this situation we use a solution pool of the best few solutions along with a symmetry pool. Whenever a new solution is found we put it in the pool and every once in a while we delete the solutions whose objective value is not good enough. On the other hand, if none of the symmetries in the symmetry pool can improve any of the solutions in the solution pool, then we can combine symmetries to increase the size of the symmetry pool. We iterate this a few times or as long as we are finding better solutions. 

This method is very fast, since the symmetries are usually permutations that move only a few elements, so checking whether they improve a solution is fast. Speed has its price, since this method cannot hope to cover all the solutions that can be generated. Another limitation of this method is that it cannot change the number of nonzeros in a solution. We will present an improvement in Section \ref{sec:compheur}.

\subsubsection{Example}

The following example illustrates the idea behind the direct improvement symmetry heuristic. Consider this problem:
\begin{align*}
\min \quad&   c^{T} x \\
                         Ax &\geq 1 \\
                  	        x &\in\set{0,1},
\end{align*}
where 
\begin{align*}
c &= \left(1\ 2\ 3\ 3\ 1\ 2\right)\\
A &= \left(
\begin{array}{cccccc}
1 & 0 & 1 & 0 & 0 & 0\\
0 & 1 & 0 & 1 & 0 & 0\\
1 & 0 & 0 & 0 & 1 & 0\\
0 & 1 & 0 & 0 & 0 & 1\\
0 & 0 & 1 & 0 & 1 & 0\\
0 & 0 & 0 & 1 & 0 & 1\\
\end{array}\right).
\end{align*}
A feasible solution is 
\begin{equation}
x = \left(1\ 1\ 1\ 1\ 0\ 0\right)
\end{equation}
with objective value 9. The constraint symmetries of this problem are generated by
\begin{equation}
\mathcal{G^C}(A) = \set{(1,3), (2,4), (4,6), (1,2)(3,4)(5,6)}.
\end{equation}
The following table shows how symmetries are used to improve the solution:
\begin{center}
\begin{tabular}{lccccc}
Nr & x & & z & & symmetry\\ \hline
1 & (1 1 1 1 0 0) & & 9 & &(4,6)\\
2 & (1 1 1 0 0 1) & & 8 & &(1,2)(3,4)(5,6)\\
3 & (1 1 0 1 1 0) & & 7 & &(4,6)\\
4 & (1 1 0 0 1 1) & & 6 & & optimal
\end{tabular}
\end{center}
There are a few key points to note here. First, because of the different objective coefficients, this problem has no symmetries, but it still has a nontrivial constraint symmetry group. This is actually typical for a lot of optimization problems, where the constraint set is symmetric and the objective coefficients are breaking the symmetry.

Further, it is important to note that not all permutations of 4 ones give a feasible solution, for example, $(0\ 1\ 0\ 1\ 1\ 1)$ is not feasible. This is because the permutation $(1,4)$ is not a constraint symmetry. The strength of our heuristic is that applying the symmetries we do not have to worry about feasibility. In fact, each solution we get this way will be exactly the same feasible as the original solution. Finally, this procedure is not guaranteed to yield an optimal solution (as it does in our example), it can get stuck in a locally optimal solution. This motivates our second approach.

\subsection{Orbit MIPping}

Another way to use symmetry information to improve the objective value of a feasible solution is to use only the orbit information. Variables that can be mapped to each other with symmetries are said to belong to the same orbit. The orbits give a partition of the variables of problem \eqref{eq:MILPorig} and it is easy to see that applying a symmetry to a solution cannot change the number of nonzeros on a given orbit. Our orbit mipping heuristic uses this simple fact to improve the solutions. Given a feasible solution $\tilde x$ and an orbit partition $\cO_1, \dots, \cO_k$ we can look at the following subproblem:
\begin{align}
 \notag \min\ &c^Tx\\
 \label{eq:MILPsym}Ax&\leq b\\
\notag \sum_{i\in\cO_j}x_i & = \sum_{i\in\cO_j}\tilde x_i,\, \forall j = 1,\dots,k\\
 \notag x&\in\set{0,1}^n.
\end{align}
In other words, for each orbit we fix the sum of variables to be the same as in $\tilde x$ and solve the resulting MILP. This will actually search a broader set than just the solutions that are symmetric to $\tilde x$. We can feed in $\tilde x$ as a feasible solution to the optimization when solving problem \eqref{eq:MILPsym}.

The rationale behind the efficiency of this method is that the extra equality constraints might imply a lot of fixings throughout the problem. This approach is useful if the constraint symmetry orbits of the solutions are large and there are only a few nonzeros on each orbit, or if the group of constraint symmetries is too large for the direct improvement heuristic to be efficient. A disadvantage of this method is that it is much slower than using the symmetries directly, but it can find better solutions. Orbit MIPping has a very important property that distinguish it from other improvement heuristics. If the set of constraint symmetries is larger than the symmetry group of the problem (something which is easy to check), then we know that there is another solution symmetric to the current solution, but with a different (better or worse) objective value.

\section{Complement-based symmetries}\label{sec:compsym}

In the second part of the paper we are presenting a new class of symmetries for MILP, we show how to compute them and bring examples from public test libraries.

\subsection{Motivation}

Consider the following MILP:
\begin{align}
 \notag \min\ x_1-x_2&\\
 \label{eq:2dexample} x_2&\leq x_1\\
 \notag x_1,x_2\ &\text{binary}
\end{align}
The feasible set is depicted in Figure \ref{fig:2dexample}. 
\begin{figure}[ht]
\begin{center}
\begin{tikzpicture}[domain=-0.2:1.2,x=3.2cm,y=3.2cm]
\draw[->] (0,-0.1) -- (0,1.1); 
\draw[->] (-0.1,0) -- (1.1,0); 
\draw[thick, fill = gray, fill opacity = 0.5] (0,0) -- (1,1) -- (1,0) -- cycle; 
\draw[->] (0.6,0.4) -- (0.75,0.25); 
\draw (0,0) node[above left] {$(0,0)$}; 
\draw (1,0) node[above right] {$(1,0)$}; 
\draw (1,1) node[above right] {$(1,1)$}; 
\draw[dashed] (-0.1,1.1) -- (1.1,-0.1); 
\end{tikzpicture}
\end{center}
\caption{The feasible set (shaded area) of the linear relaxation of problem \eqref{eq:2dexample}. The dashed line denotes the axis of symmetry. The arrow indicates the direction of the improving objective value.}\label{fig:2dexample}
\end{figure}
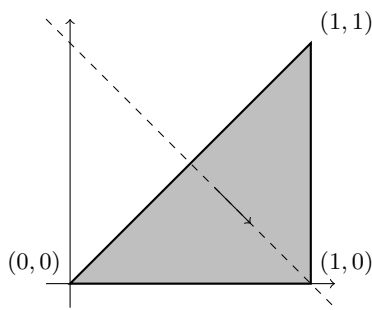
Notice that swapping the two variables (which is the only nontrivial permutation-based symmetry for this problem) does not map the feasible set into itself, so this problem does not have a symmetry in the classical sense. However, the feasible set has an obvious axis of symmetry, denoted by the dashed line. Solutions $(0,0)$ and $(1,1)$ are equivalent, with the same objective value. Our goal is to algorithmically identify symmetries of this kind. Intuitively, these are symmetries that map the 0 value of a variable to the 1 value of another variable. 

\subsection{Definition and basic properties}

Consider again problem \eqref{eq:MILPorig}. We will study symmetries of the following form:
\begin{align}
 x\mapsto Qx+q,
\end{align}
where $Q$ is a matrix with exactly one $\pm1$ coefficient in each row and column, i.e., a signed permutation matrix. The pair $(Q,q)$ is said to be a \emph{signed symmetry} or \emph{complement symmetry} of problem \eqref{eq:MILPorig} if this mapping does not change the objective value and the feasibility of a solution vector $x$. More precisely, this happens if for all $x$ we have 
\begin{align}\label{eq:sym_c_req}
 c^Tx = c^T(Qx+q),
\end{align}
and we have a regular permutation matrix $P$ such that again for all $x$ we have
\begin{align}\label{eq:sym_b_req}
b-Ax = P\left(b-A(Qx+q)\right). 
\end{align}
This means that the solution $x$ and its image $Qx+q$ are equivalent for \eqref{eq:MILPorig}.

Since conditions \eqref{eq:sym_c_req} and \eqref{eq:sym_b_req} have to hold for all $x$, we can simplify them as follows:
\begin{align}\label{eq:sym_bc_simple}
 Q^Tc &= c\\
\notag  q^Tc &= 0\\
 \notag PAQ &= A\\
 \notag P(b-Aq) &= b.
\end{align}

In addition to these requirements, our complement symmetry $(Q,q)$ has to satisfy one additional condition: it has to map binary variables to binary variables.\footnote{We have to deal with this additive term $q$ only because the range of our variables is not symmetric about the origin. If we used $\pm1$ variables instead of binaries, then we would not need it. Thus it is not surprising that $q$ is completely defined by $Q$.} Formally:
\begin{align}\label{eq:sym_bin}
 Qx+q\in\set{0,1}^n\ \text{if and only if}\ x\in\set{0,1}^n.
\end{align}
It turns out that this can be achieved by selecting $q$ appropriately:
\begin{lemma}
 If $q=\left(e-Qe\right)/2$, then $Qx+q$ is binary if and only if $x$ is binary.
\end{lemma}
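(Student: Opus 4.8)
The plan is to verify the biconditional in \eqref{eq:sym_bin} coordinate by coordinate, exploiting the fact that $Q$ is a signed permutation matrix. Write $Q = SP$ (or $PS$) where $P$ is an ordinary permutation matrix and $S = \Diag{s}$ is a diagonal matrix of signs $s_i \in \{-1,+1\}$; equivalently, in each row $i$ there is exactly one nonzero entry $Q_{i,\sigma(i)} = s_i$. Then the $i$-th component of $Qx + q$ is $s_i x_{\sigma(i)} + q_i$, and I want to show this lies in $\{0,1\}$ precisely when all the $x_j$ do. The candidate value is $q_i = (1 - \sum_j Q_{i,j})/2 = (1 - s_i)/2$, so $q_i = 0$ when $s_i = 1$ and $q_i = 1$ when $s_i = -1$.

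Next I would simply case-split on the sign $s_i$. If $s_i = +1$, the $i$-th coordinate of the image is $x_{\sigma(i)} + 0 = x_{\sigma(i)}$, which is binary iff $x_{\sigma(i)}$ is. If $s_i = -1$, the $i$-th coordinate is $-x_{\sigma(i)} + 1 = 1 - x_{\sigma(i)}$, the usual binary complement, which again is binary iff $x_{\sigma(i)}$ is. In either case the map $x \mapsto Qx + q$ acts on each coordinate either as a (permuted) copy or as a (permuted) complement of a single original coordinate, hence sends $\{0,1\}^n$ into $\{0,1\}^n$. Since $\sigma$ is a bijection, every original coordinate $x_j$ appears as exactly one image coordinate (up to complementation), so the forward map is a bijection of $\{0,1\}^n$ onto itself, giving the "only if" direction as well — alternatively, one checks directly that $Q^{-1} = Q^T$ is again a signed permutation matrix and that $x = Q^T(y - q)$ recovers a binary preimage from a binary $y$.

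There is no real obstacle here; the only thing to be careful about is the bookkeeping that $q_i = (1-s_i)/2$ pairs up correctly with the sign acting in row $i$ (as opposed to column $i$), i.e. that $q = (e - Qe)/2$ really produces the $+1$ offset exactly in those rows where the $-1$ entry sits. This is immediate from $(Qe)_i = \sum_j Q_{i,j} = s_i$. So the proof is essentially a one-line-per-case verification once the signed-permutation structure is unpacked; I would present it by fixing an arbitrary coordinate $i$, writing out $s_i x_{\sigma(i)} + q_i$, and observing it equals $x_{\sigma(i)}$ or $1 - x_{\sigma(i)}$ according to the sign.
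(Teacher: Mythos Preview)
Your argument is correct and follows essentially the same route as the paper: fix a coordinate $i$, use that row $i$ of $Q$ has a single nonzero $\pm 1$ entry, compute $q_i=(1-s_i)/2$, and case-split to see the image coordinate is either $x_{\sigma(i)}$ or $1-x_{\sigma(i)}$. If anything you are slightly more thorough, since the paper only spells out the forward implication while you explicitly note the bijection (or $Q^{-1}=Q^T$) argument for the converse.
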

\begin{proof}
Let $x$ be an arbitrary binary vector and let $i$ be an index, and let $Q_i$ be the $i$th row of $Q$. Now $Q_i$ has exactly one nonzero, let it be at the $j$th position, thus $Q_{ij} = \pm1$. The $i$th component of $Qx+q$ is then
\begin{align}
 Q_i^Tx + \frac{1 - Q_i^Te}{2} = Q_{ij}x_i + \frac{1 - Q_{ij}}{2} = \begin{cases}
                                                                     1-x_j, &\text{if}\ Q_{ij}=-1\\
                                                                     x_j, &\text{if}\ Q_{ij}=1.
                                                                    \end{cases}
\end{align}
This shows that $(Qx+q)_i$ depends only on $x_j$, and that it is binary.
\end{proof}
\begin{corollary}
Since 
\begin{align*}
q^Tc = \frac{(e-Qe)^Tc}{2} = \frac{e^T(c-Q^Tc)}{2},
\end{align*}
$Q^Tc=c$ implies $q^Tc=0$, so with this choice of $q$ we do not have to check the second condition in \eqref{eq:sym_bc_simple}, since it will follow from the first one.
\end{corollary}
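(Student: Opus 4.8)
The plan is to just substitute the value of $q$ supplied by the Lemma and chase the transposes. First I would write $q^Tc = \tfrac12(e-Qe)^Tc = \tfrac12\bigl(e^Tc - (Qe)^Tc\bigr)$, and then observe that $(Qe)^Tc = e^TQ^Tc$ since both are the same scalar. Substituting this back gives $q^Tc = \tfrac12 e^T(c - Q^Tc)$, which is precisely the chain of equalities asserted in the statement; note that up to this point no hypothesis on $Q$ has been used — it is a pure identity valid for the particular $q$ from the Lemma.

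The last step is to invoke the first condition of \eqref{eq:sym_bc_simple}, namely $Q^Tc = c$. Plugging this in yields $q^Tc = \tfrac12 e^T(c-c) = 0$, which is exactly the second condition of \eqref{eq:sym_bc_simple}. Hence that condition is automatically satisfied once we adopt the choice $q=(e-Qe)/2$, so it need not be checked independently.

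I do not expect any real obstacle here: the whole argument is a single line of linear algebra, and the only point requiring a little care is the transpose bookkeeping, i.e. recognising that $(Qe)^Tc$ and $e^TQ^Tc$ denote the same number. What is worth emphasising, rather than proving, is the consequence: after fixing $q$ as in the Lemma, the four requirements in \eqref{eq:sym_bc_simple} collapse to the three conditions $Q^Tc=c$, $PAQ=A$ and $P(b-Aq)=b$, and it is this reduced system that the algorithm in Section \ref{sec:compsymalg} will be built to solve.
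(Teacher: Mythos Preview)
Your argument is correct and is essentially the same as the paper's own justification: substitute $q=(e-Qe)/2$, rewrite $(Qe)^Tc$ as $e^TQ^Tc$, and then invoke $Q^Tc=c$ to get $q^Tc=0$. There is nothing to add.
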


\subsection{Signed permutation matrices}

Matrices with the structure of matrix $Q$ are called signed permutation matrices in the literature (see \cite{kerber71}). They are especially important for us, since they form a group, called the hyperoctahedral group, which happens to be the symmetry group of the hypercube (and by duality, the symmetry group of the hyperoctahedron, whence the name). Any symmetry of the hypercube, thus any symmetry that maps binary solutions to other binary solutions can be described by a signed permutation matrix. In this sense, signed permutations are exactly the right set of symmetries to consider for binary problems.

In comparison, regular symmetries form a much smaller group as they always leave the origin fixed.

\subsection{Interpretation}

The symmetries defined by $(Q,q)$ can map a binary variable into the complement of another (or the same) binary variable. This greatly enhances the group of symmetries that we can use for reducing the search space. In particular, if $Q_{ij}$ is 1, then $x_j$ is mapped to $x_i$ as usual, but when $Q_{ij}$ is $-1$, then $x_j$ is mapped to the complement of $x_i$. Necessarily, $x_i$ is mapped to the complement of $x_j$.

Consider for example the following problem:
\begin{align}
 \notag \min\ x_1-x_2 + x_3&\\
 \label{eq:3dexample} x_1- x_2 + x_3&\leq 1\\
 \notag x_1,x_2,x_3\ &\text{binary},
\end{align}
whose feasible set is depicted in Figure \ref{fig:3dsym}. 
\begin{figure}[ht]
\begin{center}
\begin{tikzpicture}[domain=-0.2:1.2,scale=2.8,tdplot_main_coords]
\draw[->] (0,0,0) -- (1.1,0,0) node[anchor=north east]{$x_1$};
\draw[->] (0,0,0) -- (0,1.1,0) node[anchor=north west]{$x_2$};
\draw[->] (0,0,0) -- (0,0,1.1) node[anchor=south]{$x_3$};
\draw[very thin] (0,0,1) -- (0,1,1) -- (0,1,0) -- (1,1,0) -- (1,0,0) -- (1,0,1) -- (0,0,1);
\draw[very thin] (1,1,1) -- (0,1,1);
\draw[very thin] (1,1,1) -- (1,0,1);
\draw[very thin] (1,1,1) -- (1,1,0);
\draw[very thin,dashed] (1,0,1) -- (0,1,0); 
\draw[very thick] (1,1,0) -- (1,1,1) -- (1,0,0) -- (0,0,1) -- (0,1,1) -- (0,1,0) -- (1,1,0) -- (1,0,0); 
\draw[very thick] (1,1,1) -- (0,1,1); 
\draw[very thick] (1,1,1) -- (0,0,1); 
\filldraw (.6667, .3333, .6667) circle (0.015);
\end{tikzpicture}
\end{center}
\caption{A polyhedron inside the unit cube with the symmetry $(1 \bar2 3)$.}\label{fig:3dsym}
\end{figure}
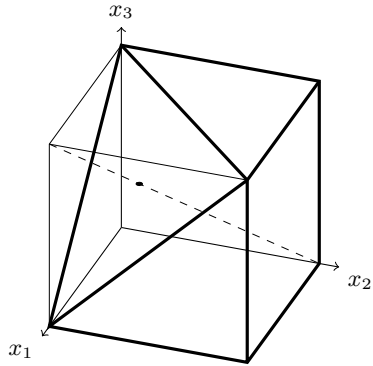
Geometrically, the symmetries are $120^\circ$ rotations about the dashed line. Extending the cycle notation for signed permutations we can write $(1 \bar2 3)$ to denote that $x_1$ is mapped to the complement of $x_2$, which in turn is mapped to $x_3$, which is mapped to $x_1$. This can be written equivalently as $(\bar1 2 \bar 3)$ using the complements of the variables.

\section{Finding complement-based symmetries}\label{sec:compsymalg}

Now that we have introduced complement symmetries we are going to present a way to obtain them from the problem description. The crux of the algorithm is a combination of a lifting procedure with the procedure of finding regular symmetries. Consider the following lifted problem (compare with \eqref{eq:MILPorig}):
\begin{align}
 \notag \min\ &(c^Tx -c^T\bar x + c^Te)/2\\
 \label{eq:sym_MILP_lifted} Ax - A\bar x &\leq 2b-Ae\\
 \notag x + \bar x &= 1\\
 \notag x, \bar x&\in\set{0,1}^n.
\end{align}
Here $\bar x$ is merely a symbol that denotes a variable. The constraints actually force it to be the complement of $x$, whence the notation, but it is important to understand that it is not the notation that makes it the complement but the constraints. The optimization problem \eqref{eq:sym_MILP_lifted} is closely related to \eqref{eq:MILPorig}:
\begin{lemma}
If $x$ is a feasible solution of \eqref{eq:MILPorig} then $(x,\bar x)$ is a feasible solution for \eqref{eq:sym_MILP_lifted} with the same objective value, and vica versa.
\end{lemma}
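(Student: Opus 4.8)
The plan is to exhibit an explicit correspondence between the feasible sets of the two problems and to check that it preserves the objective value. The natural candidate sends a feasible $x$ of \eqref{eq:MILPorig} to the pair $(x,e-x)$, and sends a feasible pair $(x,\bar x)$ of \eqref{eq:sym_MILP_lifted} back to $x$; the equality constraint $x+\bar x=e$ (the ``$1$'' on the right-hand side of \eqref{eq:sym_MILP_lifted} being the all-ones vector $e$) guarantees these two maps are mutually inverse, so it suffices to verify that each direction lands in the correct feasible region with the correct objective.

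For the forward direction I would start from $x\in\set{0,1}^n$ with $Ax\le b$ and set $\bar x := e-x$. Binarity of $\bar x$ is immediate since $x$ is binary, and $x+\bar x=e$ holds by construction. For the remaining inequality I would substitute $\bar x=e-x$ into $Ax-A\bar x$, obtaining $2Ax-Ae$, and then invoke $Ax\le b$ to get $Ax-A\bar x\le 2b-Ae$, which is exactly the lifted constraint. Finally, substituting $\bar x=e-x$ into the lifted objective $(c^Tx-c^T\bar x+c^Te)/2$ collapses it to $(2c^Tx)/2=c^Tx$, so the objective values agree.

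For the converse I would take any feasible $(x,\bar x)$ of \eqref{eq:sym_MILP_lifted}. The equality $x+\bar x=e$ forces $\bar x=e-x$, and $x\in\set{0,1}^n$ is part of feasibility, so $x$ is a candidate solution of \eqref{eq:MILPorig}. Substituting $\bar x=e-x$ into $Ax-A\bar x\le 2b-Ae$ gives $2Ax-Ae\le 2b-Ae$, i.e.\ $Ax\le b$, so $x$ is feasible for \eqref{eq:MILPorig}; the objective computation is the same as before and again yields equality.

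There is no serious obstacle: the argument is a direct substitution. The only points requiring a little care are (i) reading the right-hand side ``$1$'' in \eqref{eq:sym_MILP_lifted} as $e$ and noticing that the factor $1/2$ together with the additive terms $c^Te$ and $Ae$ are precisely what makes the scaling work out; and (ii) observing that in the forward direction binarity of $\bar x$ need not be imposed separately since it is automatic, while in the converse direction the constraint $x+\bar x=e$ is exactly what pins down $\bar x$ and makes the correspondence a genuine bijection rather than merely a surjection.
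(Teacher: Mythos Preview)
Your proposal is correct and follows essentially the same route as the paper: define $\bar x=e-x$, substitute into the lifted objective and constraint to see they reduce to the original ones, and use the equality $x+\bar x=e$ to recover $\bar x$ in the converse. The paper's only cosmetic difference is that in the backward direction it writes $Ax$ and $c^Tx$ via the decomposition $x=(x/2-\bar x/2)+(x/2+\bar x/2)$ rather than directly substituting $\bar x=e-x$, but the content is identical.
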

\begin{proof}
For the forward implication let $x$ be a feasible solution of \eqref{eq:MILPorig} and let $\bar x$ be its complement. Then we can write:
\begin{align}
\left(c^Tx - c^T\bar x + c^Te\right)/2=(c^Tx + c^Tx -c^Te + c^Te) /2 = c^Tx
\end{align}
and  
\begin{align}
Ax - A\bar x = Ax -Ae + Ax \leq b - Ae + b = 2b -Ae
\end{align}
proving that $(x,\bar x)$ is a feasible solution for \eqref{eq:sym_MILP_lifted} with the same objective value.

For the backward direction let $(x,\bar x)$ be a feasible solution of \eqref{eq:sym_MILP_lifted}. From the second constraint we get that $\bar x$ is in fact the complement of $x$, i.e., $\bar x = e-x$. Now
\begin{align}
c^Tx =c^T(x/2 - \bar x/2) + c^T(x/2 + \bar x/2) =(c^Tx -c^T\bar x +c^Te)/2
\end{align}
and 
\begin{align}
Ax = A(x/2 - \bar x/2) + A(x/2 + \bar x/2) \leq b-Ae/2 + Ae/2 = b.
\end{align}
\end{proof}
The main result of this section is the following theorem:
\begin{theorem}\label{thm:complifting}
 There is a one-to-one correspondence between the regular, permutation symmetries of problem \eqref{eq:sym_MILP_lifted} and the complement-based symmetries of problem \eqref{eq:MILPorig}.
\end{theorem}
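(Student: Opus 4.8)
The plan is to construct the bijection explicitly and then verify that it respects the defining conditions in both directions. Given a complement symmetry $(Q,q)$ of \eqref{eq:MILPorig} with associated row permutation $P$, I would build a permutation symmetry of the lifted problem \eqref{eq:sym_MILP_lifted} that acts on the $2n$ columns $(x,\bar x)$ as follows: whenever $Q_{ij}=1$, map the $x_j$-column to the $x_i$-column and the $\bar x_j$-column to the $\bar x_i$-column; whenever $Q_{ij}=-1$, map the $x_j$-column to the $\bar x_i$-column and the $\bar x_j$-column to the $x_i$-column. In block-matrix terms, writing $Q=Q_+-Q_-$ with $Q_+,Q_-\ge 0$ the positive and negative parts (so $Q_++Q_-$ is an ordinary permutation matrix), the column action is the $2n\times 2n$ permutation matrix $\begin{pmatrix}Q_+&Q_-\\Q_-&Q_+\end{pmatrix}$. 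The row permutation for the lifted problem on its first block of constraints ($Ax-A\bar x\le 2b-Ae$) is taken to be $P$; on the coupling constraints $x+\bar x=1$ it is the same column permutation restricted appropriately (these constraints are permuted among themselves since they are indexed by variables). Conversely, given a permutation symmetry of \eqref{eq:sym_MILP_lifted}, I would first argue it must respect the block structure, then read off $Q$ and $q$ from it.

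The key steps, in order, are: (1) define the two maps above and check they are well-defined, i.e.\ that $\begin{pmatrix}Q_+&Q_-\\Q_-&Q_+\end{pmatrix}$ is genuinely a permutation matrix and that it preserves the constraint $x+\bar x=e$ (immediate, since swapping the role of a coordinate and its complement is compatible with the symmetric constraint $x_j+\bar x_j=1$); (2) verify that this lifted permutation is a symmetry of \eqref{eq:sym_MILP_lifted}, which reduces to checking the analogues of \eqref{eq:symcondmat}--\eqref{eq:symcondobj} for the lifted data --- here one uses the simplified conditions \eqref{eq:sym_bc_simple}, namely $PAQ=A$ and $Q^Tc=c$, together with $P(b-Aq)=b$ and the identity $q=(e-Qe)/2$ from the Lemma, to match the lifted right-hand side $2b-Ae$ and the lifted objective $(c^Tx-c^T\bar x+c^Te)/2$; (3) for the reverse direction, show that any permutation symmetry of \eqref{eq:sym_MILP_lifted} cannot mix the two blocks arbitrarily --- the coupling constraints $x_j+\bar x_j=1$ pin down a perfect matching between the $x$-variables and $\bar x$-variables that any symmetry must preserve (each variable appears in exactly one coupling constraint, with coefficient $1$, and that constraint has right-hand side $1$), so the symmetry is forced into the block form $\begin{pmatrix}Q_+&Q_-\\Q_-&Q_+\end{pmatrix}$ for some nonnegative matrices with $Q_++Q_-$ a permutation; (4) set $Q=Q_+-Q_-$ and $q=(e-Qe)/2$, and check that the lifted symmetry conditions translate back into \eqref{eq:sym_bc_simple}, using the Lemma to see that $q$ is exactly the additive term forced by the block structure; (5) confirm the two maps are mutually inverse, which is immediate from the explicit formulas.

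The main obstacle I expect is step (3): proving that a permutation symmetry of the lifted problem must respect the $x$/$\bar x$ block decomposition rather than, say, sending an $x$-column to some unrelated $x$-column while sending its partner $\bar x$-column somewhere incompatible. The argument hinges on the rigidity imposed by the coupling constraints $x+\bar x=e$: since the column permutation of the lifted problem must map the set of coupling constraints to itself (they are the only equality constraints, or can be distinguished structurally), and each coupling constraint links exactly the pair $\{x_j,\bar x_j\}$, the permutation must send such a pair to another such pair, which is precisely the block form claimed. One should also be slightly careful that the correspondence is well-defined in the presence of a variable mapped to its own complement (a fixed point of $Q$ with sign $-1$, as in the $(1\bar2 3)$ example), but the block-matrix formulation handles this uniformly. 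The remaining steps are routine substitutions of the simplified conditions \eqref{eq:sym_bc_simple} into the lifted data, made clean by having introduced $Q_+,Q_-$ and by invoking the Lemma for the value of $q$.
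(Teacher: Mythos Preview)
Your proposal is correct and follows essentially the same route as the paper: decompose $Q$ into its positive and negative parts, lift to the $2n\times 2n$ block permutation, and verify the symmetry conditions in both directions. The only noticeable difference is in step~(3): the paper derives the block form $\begin{pmatrix}Q_1&Q_2\\Q_2&Q_1\end{pmatrix}$ algebraically, by reading off $Q_1+Q_3=Q_2+Q_4=P_2^{-1}$ from the coupling-constraint rows of the lifted matrix equation and then arguing entrywise that $Q_1=Q_4$ and $Q_2=Q_3$, whereas you reach the same conclusion combinatorially by observing that the coupling constraints define a perfect matching on the $2n$ columns that any symmetry must preserve---but these are two phrasings of the same fact.
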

\begin{proof}
The proof is constructive. First, let $(Q,q)$ together with $P$ be a signed symmetry of \eqref{eq:MILPorig}. Let $Q_+\geq0$ and $Q_-\leq0$ be the positive and negative parts of $Q$ such that $Q=Q_+ + Q_-$. Define the following matrices:
\begin{align}
 \widetilde P&=\begin{pmatrix}
                P & 0\\0 & \left(Q_+-Q_-\right)^{-1}
               \end{pmatrix}
\\
 \widetilde Q&=\begin{pmatrix}
                Q_+ & -Q_-\\-Q_- & Q_+
               \end{pmatrix}.
\end{align}
We claim that these two matrices together define a problem symmetry for \eqref{eq:sym_MILP_lifted}. First of all notice that $Q_+-Q_-$ is a regular permutation matrix, so its inverse exists and $\widetilde P$ is well-defined. Moreover, because of the zero blocks on the off-diagonal, $\widetilde P$ does not mix the two kinds of constraints in \eqref{eq:sym_MILP_lifted} together.

We have to check the objective, the right-hand side and the coefficient matrix of \eqref{eq:sym_MILP_lifted} under this symmetry:

The objective function is easy:
\begin{align}
{ \widetilde Q}^T \begin{pmatrix}
              c\\-c
             \end{pmatrix}&=\begin{pmatrix}
             (Q_+ + Q_-)^Tc \\ -(Q_- + Q_+)^Tc
             \end{pmatrix}=\begin{pmatrix}
             Q^Tc \\ -Q^Tc
             \end{pmatrix}=\begin{pmatrix}
              c\\-c
             \end{pmatrix},
\end{align}
where we used \eqref{eq:sym_c_req} in the last step.

The right-hand side is also very straightforward. It is obvious that we only need to check the effect of $P$, as the lower right block of $\widetilde P$ acts on an all-1 vector on the right-hand side:
\begin{align}
 \notag P(2b-Ae) &= 2Pb - PAe = 2(b + PAq)-PAe = \\
  \notag &=2b + 2PA(e - Qe)/2 - PAe = 2b + PAe - PAQe - PAe =\\
  &=2b - Ae,
\end{align}
where we used the definition of $q$ and the requirements \eqref{eq:sym_bc_simple}.
Finally, we can turn to the coefficient matrix:
\begin{align}
 \notag   \widetilde P \begin{pmatrix}
               A &-A \\ I & I
              \end{pmatrix}\widetilde Q &=
              \begin{pmatrix}
                P & 0\\0 & \left(Q_+-Q_-\right)^{-1}
               \end{pmatrix}
               \begin{pmatrix}
               A &-A \\ I & I
              \end{pmatrix}
              \begin{pmatrix}
                Q_+ & -Q_-\\-Q_- & Q_+
               \end{pmatrix} = \\
\notag          &=\begin{pmatrix}
                P & 0\\0 & \left(Q_+-Q_-\right)^{-1}
               \end{pmatrix}
               \begin{pmatrix}
               AQ_+ +AQ_-&-AQ_- - AQ_+ \\ Q_+-Q_- & -Q_- +Q_+
              \end{pmatrix}=\\
\notag           &=\begin{pmatrix}
                P & 0\\0 & \left(Q_+-Q_-\right)^{-1}
               \end{pmatrix}
               \begin{pmatrix}
               AQ &  -AQ \\ Q_+-Q_- & Q_+ -Q_-
              \end{pmatrix}=\\
\notag           &=\begin{pmatrix}
                PAQ & -PAQ\\\left(Q_+-Q_-\right)^{-1}\left(Q_+-Q_-\right) & \left(Q_+-Q_-\right)^{-1}\left(Q_+-Q_-\right)
               \end{pmatrix}=\\
              &=\begin{pmatrix}
                A & -A\\I & I
               \end{pmatrix}.              
\end{align}
This shows that $\widetilde P$ and $\widetilde Q$ are indeed symmetries of \eqref{eq:sym_MILP_lifted}.

Now let us prove the opposite direction. Take a regular permutation symmetry of \eqref{eq:sym_MILP_lifted}. Obviously, we cannot map the different kinds of constraints into each other, so the row permutation matrix will look like this:
\begin{align}
\begin{pmatrix}
 P_1 & 0 \\
 0 & P_2
\end{pmatrix},
 \end{align}
where $P_1$ and $P_2$ are permutation matrices acting on the two kinds of constraints. The column permutations however can be quite complex, so for now let us just partition it according to the block structure of the problem:
\begin{align}
 \begin{pmatrix}
  Q_1 & Q_2 \\ Q_3 & Q_4
 \end{pmatrix}.
\end{align}
These matrices satisfy the following:
\begin{align}
 \label{eq:sym_lifter_cond_A}\begin{pmatrix}
 P_1 & 0 \\
 0 & P_2
\end{pmatrix}
\begin{pmatrix}
                A & -A\\I & I
               \end{pmatrix}\begin{pmatrix}
  Q_1 & Q_2 \\ Q_3 & Q_4
 \end{pmatrix} &= \begin{pmatrix}
                A & -A\\I & I
               \end{pmatrix}\\
\label{eq:sym_lifter_cond_c}\begin{pmatrix}
  Q_1 & Q_2 \\ Q_3 & Q_4
 \end{pmatrix}               
\begin{pmatrix}
              c\\-c
             \end{pmatrix} & = \begin{pmatrix}
              c\\-c
             \end{pmatrix}\\
\label{eq:sym_lifter_cond_b}    \begin{pmatrix}
 P_1 & 0 \\
 0 & P_2
\end{pmatrix}\begin{pmatrix}
              2b - Ae\\ e
             \end{pmatrix} &= \begin{pmatrix}
              2b - Ae\\ e
             \end{pmatrix}
\end{align}             
Now working out the lower two terms of \eqref{eq:sym_lifter_cond_A} we get 
\begin{align*}
 P_2(Q_1+Q_3)&=I\\
 P_2(Q_2+Q_4)&=I,
\end{align*}
which implies $Q_1+Q_3=Q_2+Q_4= P_2^{-1}$. We are now going to prove that $Q_1=Q_4$ and $Q_2=Q_3$. For this let us consider the case when ${Q_1}_{ij}=1$ for some indices $i$ and $j$. Since the entire matrix is a permutation matrix, this implies that ${Q_2}_{ik}=0$ and ${Q_3}_{kj}=0$ for all values of $k$. Using $Q_1+Q_3=Q_2+Q_4$ we get that
\begin{align}
{Q_4}_{ij} = \left(Q_2+Q_4\right)_{ij} =  \left(Q_1+Q_3\right)_{ij} = 1.
\end{align}
Swapping the role of $Q_1$ and $Q_4$ we get that ${Q_4}_{ij}=1$ implies ${Q_1}_{ij}=1$, so a coefficient in $Q_1$ is 1 if and only if it is also 1 in $Q_4$. This proves that $Q_1=Q_4$. The $Q_2=Q_3$ result is proved similarly.

Now we can unambiguously define 
\begin{align}
 Q&:=Q_1-Q_2 = Q_4-Q_3,\ \text{and}\\
 q&:=\frac{e-Qe}{2}
\end{align}
Now we claim that $(Q,q)$ together with $P_1$ is a signed symmetry for \eqref{eq:MILPorig}, for which we only need to check the three conditions in \eqref{eq:sym_bc_simple}.

The objective:
\begin{align*}
 Q^Tc = (Q_1-Q_2)^Tc = c
\end{align*}
because of \eqref{eq:sym_lifter_cond_c}. 
The coefficient matrix (using $Q=(Q_1 -Q_2+Q_4 - Q_3)/2$)
\begin{align*}
 P_1AQ &= P_1A(Q_1 -Q_2+Q_4 - Q_3)/2 =\\
 &=P_1A(Q_1-Q_3)/2 - P_1A(Q_2-Q_4)/2 = A/2 - (-A)/2 = A,
\end{align*}
where we used the upper part of \eqref{eq:sym_lifter_cond_A}. Finally, the right-hand side:
\begin{align*}
 P_1(b-Aq) &= P_1b - P_1A(e - Qe)/2 = P_1(2b-Ae)/2 + P_1AQe/2 =\\
 &=(2b-Ae)/2 + Ae/2 = b,
\end{align*}
where we used the upper part of \eqref{eq:sym_lifter_cond_b}.

Notice that the correspondence is indeed a bijection, as all the possible permutation symmetries of \eqref{eq:sym_MILP_lifted} must have the structure of $\widetilde P$ and $\widetilde Q$. 

This completes the proof of Theorem \ref{thm:complifting}.
\end{proof}
\begin{remark}
The additional constraints in \eqref{eq:sym_MILP_lifted} ensure that the complement of the image is the image of the complement. They are necessary and cannot be dropped, otherwise there is nothing to link a variable to its complement. This is shown by the fact that they were used to prove that a permutation symmetry of \eqref{eq:sym_MILP_lifted} has the special block structure.
\end{remark}

\subsection{Implementation details}

As the lifted problem \eqref{eq:sym_MILP_lifted} uses only data that is present in the original problem \eqref{eq:MILPorig} we can modify the symmetry detection algorithms (see, e.g., \cite{katebi12}) to search for complement symmetries as well. This will incur only a moderate storage overhead, but obviously, the coefficient matrix would not have to be stored in its lifted form, the algorithm can be custom tailored to operate only on the original data. 

\subsection{Example}

Consider again problem \eqref{eq:3dexample} depicted in Figure \ref{fig:3dsym}. The lifted version is:
\begin{align}
 \notag \min\ (x_1-x_2 + x_3 - \bar x_1 + \bar x_2 - \bar x_3 + 1)/2&\\
 \label{eq:3dexamplelifted} x_1- x_2 + x_3 - \bar x_1 + \bar x_2 - \bar x_3&\leq 0\\
\notag x_i + \bar x_i &=1, \, i=1,2,3\\
 \notag x_1,x_2,x_3,\bar x_1, \bar x_2, \bar x_3\ &\text{binary}.
\end{align}
Running symmetry detection on this problem we get that $x_1$, $\bar x_2$ and $x_3$ belong to the same orbit, exactly as we expected by looking at Figure \ref{fig:3dsym}. The symmetries for problem \eqref{eq:3dexamplelifted} are $(1\bar2 3)$ and, equivalently, $(\bar1 2\bar3)$.

\subsection{Using signed symmetries in heuristics}\label{sec:compheur}

Signed symmetries can be used in the regular way in orbital branching and orbital fixing, see \cite{ostrowski11} for details. However, a probably more useful way to use them is to apply them to feasible solutions. Signed permutations can map a solution into a broader set of solutions by being able to change the number of nonzeros. Also, by modifying problem \eqref{eq:MILPsym} slightly, we can use signed symmetries for orbit MIPping. Again, let $\tilde x$ be a feasible solution and let $\cO_1, \dots, \cO_k$ be the signed orbit partition of problem \eqref{eq:MILPorig}.
\begin{align}
 \notag \min\ &c^Tx\\
 \label{eq:MILPsym}Ax&\leq b\\
\notag \sum_{i\in\cO_j}x_i + \sum_{\bar i\in\cO_j}(1 - x_i) & = \sum_{i\in\cO_j}\tilde x_i + \sum_{\bar i\in\cO_j}(1-\tilde x_i),\, \forall j = 1,\dots,k\\
 \notag x&\in\set{0,1}^n, 
\end{align}
where $\bar i\in\cO_j$ denotes the relation that the complement of variable $\tilde x_i$ belongs to the orbit $\cO_j$.

\section{Extensions and future work}

The developments in this paper can be naturally extended to problems containing other types of variables. We chose a pure binary problem only to simplify the notation. Also, with a slight modification to the lifted problem \eqref{eq:sym_MILP_lifted} it is possible to map the lower bound of any boxed variable to the upper bound of a boxed variable. Looking at the conditions in \eqref{eq:sym_bc_simple} we have showed that signed symmetries are the largest class of symmetries that also map binary variables to binary variables. However, these conditions can be satisfied by other symmetries, which are thus not symmetries of the hypercube. These can be useful for special problem classes. These symmetries are the subject of future research.

\bibliographystyle{splncs}
\bibliography{SAS_symmetry_2014}
\end{document}